\newcommand\blfootnote[1]{%
  \begingroup
  \renewcommand\thefootnote{}\footnote{#1}%
  \addtocounter{footnote}{-1}%
  \endgroup
}
\newtheorem{theorem}{THEOREM}[section] 
\newtheorem{lemma}{Lemma}[section]
\newtheorem{remark}{REMARK}[section]
\numberwithin{equation}{section}
\newcommand{\dx}{\, \mathrm{d}x}
\newcommand{\weenull}{\overset{\substack{\circ\\ \vspace{-0.34cm}}}{W}^{\substack{1,1\\ \vspace{-0.64cm}}}}
\newcommand{\rn}{\mathbb{R}^{n}}
\newcommand{\RN}{\mathbb{R}^{N}}
\newcommand{\R}{\mathbb{R}}
\newcommand{\loc}{\mathrm{loc}}
\newcommand{\N}{{\mathbb{N}}}
\newcommand{\udel}{u_\delta}
\newcommand{\wudel}{\widetilde{u}_\delta}
\newcommand{\nabdel}{\nabla u_\delta}
\newcommand{\intom}{\int_\Omega}
\newcommand{\intomd}{\int_{\Omega-D}}
\newcommand{\gr}[1]{(\ref{#1})}
\def\IntO{\int_\Omega}
\def\IntQ2Rx{\int_{{Q_{2R} (x_0)}}}
\def\Jpe{\widetilde{J}}
\def\O{\Omega}
\title{\bf On the local boundedness of generalized minimizers of variational problems with linear growth}
\author{M.~Bildhauer \and M.~Fuchs \and J.~M\"uller \and X.~Zhong}
\date{}
\begin{document}

\iffalse
%%%%%%%%%%%%%%%%% Hier wird der Titel erstellt %%%%%%%%%%%%%%%%%%%%%%%%%%%%%%%%

\Panzahl    {3}                   %% number of authors (up to 5 authors supp.)
\Pautor     {Martin Fuchs}       %% name of 1st author

%% address of 1st author
\Panschrift {Saarland University \\ Department of Mathematics \\
             P.O. Box 15 11 50   \\ 66041 Saarbr\"ucken \\
             Germany}
\Pepost     {fuchs@math.uni-sb.de}     %% email of 1st author 

\Ptitel     {On the local boundedness of generalized minimizers of variational problems with linear growth}  %% title

\Pjahr      {2017}               %% year of publ.
\Pnummer    {390}                 %% preprint no. (ask "Preprintbeauftragten")

\Pdatum     {\today}             %% date of submission to journal, default: today 

%%--<>------------- 2nd AUTHOR (if none, leave empty) --------------
\Pcoautor   {Jan Müller}   %% name of 2nd author 
\Pcoanschrift {Saarland University \\ Department of Mathematics \\
             P.O. Box 15 11 50   \\ 66041 Saarbr\"ucken \\
             Germany}
\Pcoepost   {jmueller@math.uni-sb.de}     %% email  of 2nd author
 
%%--<>------------- 3rd AUTHOR --------------
\qPautor     {Xiao Zhong}     %% name of 3rd author
\qPanschrift { FI-00014 University of Helsinki \\ Department of Mathematics\\
             P.O. Box 68 (Gustaf Hällströmin katu 2b) \\ 00100 Helsinki}
\qPepost     {xiao.x.zhong@helsinki.fi}   %% email of 3rd author
%%%%%%%%%%%%%%%%%%%%%%%%%%%%%%%%%%%%%%%%%%%%%%%%%%%%%%%%%%%%%%%%%%%%%%%%%

\Ptitelseite                 %% generates the title pages 

%%%%%%%%%%%%%%%%%%%%%%%%%%%%%%%%%%%%%%%%%%%%%%%%%%%%%%%%%%%%%%%%%%%%%%%%%
\fi

\parindent2ex

\maketitle
%\vspace{1.5cm}
\noindent \\
\begin{bf}AMS classification\end{bf}: 49N60, 49Q20, 49J45
\noindent \\
\begin{bf}Keywords\end{bf}: variational problems of linear growth, TV-regularization, denoising and inpainting of images, local boundedness of solutions.

\begin{abstract} We prove local boundedness of generalized solutions to a large class of variational problems of linear growth including boundary value problems of minimal surface type and models from image analysis related to the procedure of TV--regularization occurring in connection with the denoising of images, which might even be coupled with an inpainting process. Our main argument relies on a Moser--type iteration procedure.
\end{abstract}
\blfootnote{
\begin{large}\Letter\end{large}Michael Bildhauer (bibi@math.uni-sb.de), \\
Martin Fuchs (fuchs@math.uni-sb.de),\\
 Jan Müller (corresponding author) (jmueller@math.uni-sb.de),\\
Saarland University (Department of Mathematics), P.O. Box 15 11 50, 66041 Saarbrücken, Germany,\\
Xiao Zhong (xiao.x.zhong@helsinki.fi),\\
 FI-00014 University of Helsinki (Department of Mathematics), P.O. Box 68 (Gustaf Hällströmin katu 2b),  00100 Helsinki (Finland).
}

\section{Introduction}
In this note we investigate variational problems of linear growth defined for functions $u : \O \to \R^N$ on a domain $\O \subset \rn$. The general framework of these kind of problems is explained e.g. in the monographs \cite{Giu,GMS1,GMS2,AFP,Bi}, where the reader interested in the subject will find a lot of further references as well as the definitions of the underlying spaces such as $ \mbox{BV} (\O,\R^N)$ and
$W^{1,p} (\O,\R^N)$ (and their local variants) consisting of all functions having finite total variation and the mappings with first order distributional derivatives located in the Lebesgue class $L^p (\O,\R^N)$, respectively. We will mainly concentrate on the case $n \ge 2$ assuming that $\O$ is a bounded Lipschitz region, anyhow, the case $n = 1$ can be included but is accessible by much easier means as it is outlined for example in \cite{BGH} and \cite{FMT}. To begin with, we consider the minimization problem
\begin{equation}
\label{G1}
J [w] := \IntO F (\nabla w) \dx \to \min \ \mbox{in} \ u_0 + \weenull (\O,\R^N)
\end{equation}
with boundary datum
\begin{equation}
\label{G2}
u_0 \in W^{1,1} (\O,\R^N)\, ,
\end{equation}
where $ \weenull (\O,\R^N)$ is the class of all functions from the Sobolev space $W^{1,1} (\O,\R^N)$ having vanishing trace (see, e.g., \cite{Ad}). Throughout this note we will assume that the energy density $F : 
\R^{N\times n} \to [0, \infty)$ satisfies the following hypotheses: 
\begin{gather}
\label{G3} F \in C^2 (\R^{N\times n})\text{ and  (w.l.o.g.) } \ F (0) = 0;\\
\label{G5} \nu_1 |P| - \nu_2 \le F (P) \le \nu_3|P| + \nu_4;\\
\label{growth} 0\leq D^2F(P)(Q,Q)\leq \nu_5\frac{1}{1+|P|}|Q|^2.
\end{gather}
with suitable constants $\nu_1, \nu_3, \nu_5 > 0, \ \nu_2, \nu_4 \ge 0$ and for all $P,Q\in\R^{N\times n}$. For notational simplicity, we collect the constants $\nu_i$ in a tuple
\[
\nu:=\big(\nu_1,...,\nu_5\big). 
\]
\begin{remark}\label{rm1.1}
 We note  that the above assumptions on $F$ particularly imply 
\begin{equation}
\label{G4}
|DF (P)| \le c(n)\cdot\max\{\nu_1,\nu_3\},
\end{equation}
which is a consequence of the linear growth condition \gr{G5} combined with the fact that $F$ is a convex function, which follows from the first inequality in \gr{growth}. A short proof of estimate \gr{G4} is given in \cite{Da}, Lemma 2.2 on p. 156. Moreover, the convexity of $F$ together with \gr{G5} also yields
\[
0 = F (0) \ge F (P) - P:D F (P) \ge \nu_1 |P| - \nu_2 - P : DF (P),
\]
hence
\begin{equation}
\label{G6}
DF (P) : P \ge \nu_1 |P| - \nu_2\, , \ P \in \R^{N\times n}.
\end{equation}
\end{remark}
As a matter of fact, problem (\ref{G1}) has to be replaced by its relaxed variant (see, e.g., \cite{AFP}, p. 303 and Theorem 5.47 on p. 304, or \cite{Bi}, chapter 4, as well as \cite{Bi2})
\begin{eqnarray}
\label{G7}
\Jpe [w] &: =& \IntO F \left(\nabla^a w\right) \dx + \IntO F^\infty \left(\frac{\nabla^s w}{|\nabla^s w|}\right) \mathrm{d} \left|\nabla^s w\right| \\[2ex]
&&+ \int_{\partial \O} F^\infty \left(\left(u_0 - w\right) \otimes\mathfrak{n}\right) \mathrm{d}  \mathcal{H}^{n - 1} \to \min  \mbox{ in } BV(\O,\R^N). \nonumber
\end{eqnarray}
Here $\nabla w = \nabla^a w \, {\cal{L}}^n + \nabla^s w$ is the Lebesgue decomposition of the measure $\nabla w$, $F^\infty$ is the recession function of $F$, i.e.
\[
F^\infty (P) = \lim_{t \to \infty} \frac{1}{t} \,F (tP), \ P \in \R^{N\times n},
\]
$\mathcal{H}^{n - 1}$ is Hausdorff's measure of dimension $n - 1$, and $\mathfrak{n}$ denotes the outward unit normal to $\partial \O$. By construction, problem (\ref{G7}) admits at least one solution, and the main result of \cite{Bi2} (compare Theorem 3 in this reference) states: 
\begin{theorem}
Let (\ref{G2}) - (\ref{growth}) hold together with $n = 2$ and $N=1$. Assume in addition that $F$ is of class $C^2$ satisfying for some $\mu > 1$ the condition of $\mu$-ellipticity
\begin{equation}
\label{G8}
\nu_6\left(1 + |P|\right)^{- \mu} |Q|^2 \le D^2 F (P) (Q, Q), \ P, Q \in \R^2\, ,
\end{equation}
with a constant $\nu_6 > 0$.
\begin{enumerate}
\item[a)] Assume $\mu \le 3$ in (\ref{G8}). Then (\ref{G7}) admits a solution $u^\ast$ in the space $W^{1,1} (\O)$. For each subdomain $\O^\ast \subset  \subset \O$ we have
\[
\int_{\O^\ast} \left|\nabla u^\ast \right| \ln (1 + |\nabla u^\ast|) \dx < \infty\, ,
\]
and any BV-solution $u$ of (\ref{G7}) differs from $u ^\ast$ by an additive constant.
\item[b)] If the case $\mu < 3$ is considered, then $u^\ast$ from a)  is actually of class $C^{1,\alpha} (\O)$ for any $\alpha \in (0, 1)$.
\end{enumerate}
\end{theorem}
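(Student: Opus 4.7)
The natural strategy is vanishing viscosity. For $\delta\in(0,1)$ introduce the regularized integrand $F_\delta(P):=F(P)+\tfrac{\delta}{2}|P|^2$ and a smooth approximation $u_0^\delta\in W^{1,2}(\Omega)$ of $u_0$ in $W^{1,1}$. The coercive problem
\[
J_\delta[w]=\int_\Omega F_\delta(\nabla w)\,\dx \to\min \quad \text{in }u_0^\delta+W^{1,2}_0(\Omega)
\]
admits a unique minimizer $u_\delta\in W^{2,2}_{\loc}(\Omega)$ by strict convexity and standard elliptic regularity from (\ref{G5})--(\ref{growth}), satisfying the Euler--Lagrange equation $\int_\Omega DF_\delta(\nabla u_\delta)\cdot\nabla\varphi\,\dx=0$ for all $\varphi\in W^{1,2}_0(\Omega)$. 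The energy comparison $J_\delta[u_\delta]\le J_\delta[u_0^\delta]$ combined with (\ref{G5}) yields a $\delta$-uniform $BV$-bound on $\{u_\delta\}$.

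The heart of the argument is a local, $\delta$-uniform a priori estimate
\[
\int_{\Omega'}|\nabla u_\delta|\ln(1+|\nabla u_\delta|)\,\dx \le C(\Omega',u_0,\nu), \qquad \Omega'\Subset\Omega.
\]
To obtain it, I would differentiate the Euler--Lagrange equation in direction $x_\gamma$ and test the resulting linearized equation with $\eta^2\,\partial_\gamma u_\delta\cdot\Phi(1+|\nabla u_\delta|^2)$, where $\eta\in C^\infty_c(\Omega)$ is a cutoff and $\Phi(t)\sim\ln(1+t)^\alpha$ is a weight with a suitable $\alpha>0$. The $\mu$-ellipticity (\ref{G8}) delivers coercivity of order $(1+|\nabla u_\delta|)^{-\mu}$ on the Hessian, while the upper bound (\ref{growth}) of order $(1+|\nabla u_\delta|)^{-1}$ controls the right-hand side through Cauchy--Schwarz. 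In dimension $n=2$ the scalar Sobolev embedding $W^{1,2}\hookrightarrow L^q$ for every $q<\infty$ allows the exponent balance to close the resulting Caccioppoli inequality precisely when $\mu\le 3$; this is the borderline condition of the theorem.

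With the uniform $L\log L_{\loc}$ bound in hand, Dunford--Pettis extracts a subsequence along which $\nabla u_\delta\rightharpoonup\nabla u^*$ in $L^1_{\loc}$, with $u^*\in W^{1,1}(\Omega)$ of trace $u_0$; the lower semicontinuity of $\Jpe$ together with the minimality of $u_\delta$ for $J_\delta$ identifies $u^*$ as a solution of (\ref{G7}), and Fatou transfers the $L\log L_{\loc}$ integrability to $\nabla u^*$. Uniqueness up to additive constants exploits the scalar nature $N=1$: for any $BV$-solution $u$, convexity of $F$ together with (\ref{G8}) forces the absolutely continuous gradient of $v:=u-u^*$ to vanish, and the energy identity combined with (\ref{G5}) forces its singular part to vanish as well, so $v$ is constant. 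For part (b), the \emph{strict} inequality $\mu<3$ opens a geometric gap in the exponent bookkeeping of Step~2, enabling a Moser--De~Giorgi iteration applied to powers of $1+|\nabla u_\delta|$ that terminates in a $\delta$-uniform $L^\infty_{\loc}$ bound on $\nabla u_\delta$; classical Schauder theory for the then non-degenerate limit equation satisfied by $u^*$ yields $u^*\in C^{1,\alpha}_{\loc}(\Omega)$. The principal obstacle is the delicate exponent tracking at the critical threshold $\mu=3$, where the two-dimensional Sobolev margin only barely accommodates the degenerate ellipticity.
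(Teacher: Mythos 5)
The paper does not give a proof of Theorem 1.1: it is quoted verbatim from Bildhauer's article \cite{Bi2} (``the main result of \cite{Bi2} (compare Theorem 3 in this reference) states''), and the machinery built in Sections 2--4 of the present paper is aimed at Theorems 1.2 and 1.4, where the authors deliberately \emph{abandon} the quadratic viscosity term in favour of a linear, $\mu$-elliptic regularization. With that understood, your sketch is a fair reconstruction of the standard route followed in \cite{Bi2} and related works of Bildhauer--Fuchs: quadratic vanishing viscosity, differentiated Euler--Lagrange equation, weighted Caccioppoli estimate combining $\mu$-ellipticity with the two-dimensional Sobolev--Poincar\'e inequality, a resulting $L\log L_{\loc}$ bound on $\nabla u_\delta$ that survives as $\delta\downarrow 0$, and a Moser-type gradient iteration in the subcritical regime $\mu<3$ leading to local $C^{1,\alpha}$ via uniform ellipticity on compact sets.

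Two points in the sketch are genuinely thinner than they should be. First, the $\delta$-uniform $BV$ bound from $J_\delta[u_\delta]\le J_\delta[u_0^\delta]$ only works if the approximations $u_0^\delta$ are chosen so that $\delta\,\|\nabla u_0^\delta\|^2_{L^2(\Omega)}\to 0$; merely ``$u_0^\delta\to u_0$ in $W^{1,1}$'' leaves the quadratic term uncontrolled. Second, and more seriously, the uniqueness step is not a direct convexity statement. Strict convexity (coming from \gr{G8}) identifies only the \emph{absolutely continuous} gradients, $\nabla^a u=\nabla u^*$ a.e.; the recession function $F^\infty$ is positively $1$-homogeneous and therefore \emph{never} strictly convex, so ``the energy identity forces the singular part to vanish'' is not a one-line consequence of \gr{G5}. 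In \cite{Bi2} one exploits the fact that $u^*\in W^{1,1}(\Omega)$ is itself a relaxed minimizer, compares the three pieces of $\Jpe$ separately for the convex combination $\tfrac12(u+u^*)$, and uses the minimality twice to force both the interior singular measure and the boundary defect of $u-u^*$ to vanish, leaving only an additive constant. That argument is the actual content of the uniqueness claim and should not be compressed into ``strict convexity plus the energy identity.''
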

\begin{remark}\label{rm1.2}
The above results extend to vector valued functions $u : \O \to \RN, N \ge 2$, provided we impose the structure condition
\begin{equation}
\label{G9}
F (P) = \widetilde{F} \big(|P|\big)
\end{equation}
for a suitable function $\widetilde{F} : [0, \infty) \to [0, \infty)$ of class $C^2$ which satisfies appropriate requirements implying \gr{G3}-\gr{growth} for $F$. For details we refer to the appendix.
\end{remark}
\begin{remark}
The main feature of Theorem 1.1 is that the ellipticity condition (\ref{G8}) together with an upper bound on the parameter $\mu$ is sufficient for obtaining a minimizer in a Sobolev class or even in a space or smooth functions. At the same time, the counterexample in section 4.4 of \cite{Bi} shows the sharpness of the limitation $\mu \le3$.
\end{remark}
\noindent Our first result concerns the situation where we drop condition (\ref{G8}) or allow values $\mu > 3$ even without restriction on the dimension $n$.
\begin{theorem}
Under the assumptions (\ref{G2}) - (\ref{growth}) the variational problem (\ref{G7}) has a solution $u \in BV(\Omega,\R^N)$, which in addition is a locally bounded function, i.e. $u\in BV(\Omega,\R^N)\cap L^\infty_\loc(\Omega,\R^N)$.
\end{theorem}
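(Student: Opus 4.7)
Existence of a BV-minimizer $u$ of $\Jpe$ is standard (direct method, as recalled in the references cited before (\ref{G7})); the new content is the local $L^\infty$-bound, which we obtain by a three-step procedure: regularize, run a Moser iteration with constants uniform in the regularization parameter, and pass to the limit.

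For the regularization, for $\delta\in(0,1)$ replace $F$ by $F_\delta(P):=F(P)+\tfrac{\delta}{2}|P|^2$ and, after smoothly mollifying $u_0$ so that the Dirichlet class becomes non-empty in $W^{1,2}$, let $\udel$ be the unique minimizer of $w\mapsto \IntO F_\delta(\nabla w)\dx$. The integrand $F_\delta$ is strictly convex with quadratic upper growth, so $\udel$ is regular enough by standard elliptic theory to justify the manipulations that follow. A uniform energy bound combined with the lower semicontinuity of $\Jpe$ yields a subsequence that converges weakly$^\ast$ in $BV(\Omega,\RN)$ to a minimizer $u^\ast$ of $\Jpe$; it therefore suffices to establish an $L^\infty_\loc$-bound on $\udel$ that is independent of $\delta$.

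For the Moser iteration, fix concentric balls $B_R\subset B_{2R}\subset\subset\Omega$, a Lipschitz cutoff $\eta$ with $\eta\equiv 1$ on $B_R$ and $|\nabla\eta|\le 2/R$, a level $k\ge 0$, an exponent $\beta\ge 0$, and a large integer $s$. Exploiting the radial structure $F(P)=\widetilde F(|P|)$ from Remark \ref{rm1.2}, test the Euler--Lagrange equation
\[
\IntO DF_\delta(\nabla\udel):\nabla\varphi\dx=0
\]
with $\varphi=\eta^{s}(|\udel|^2-k^2)_+^{\beta/2}\udel$. The non-negativity of $D^2F$ from \gr{growth}, the pointwise bound \gr{G4} on $DF$, and the favourable contribution of the $\delta$-term jointly deliver a Caccioppoli estimate
\[
\int_{B_R}\bigl|\nabla\bigl[\eta^{s/2}G_\beta(\udel,k)\bigr]\bigr|\dx \le \frac{C(n,\nu,\beta)}{R}\int_{B_{2R}} G_\beta(\udel,k)\dx
\]
(up to lower-order terms), where $G_\beta(\udel,k)$ denotes an appropriate power of $(|\udel|^2-k^2)_+$ and the constant does not depend on $\delta$. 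The endpoint Sobolev embedding $BV(\Omega)\hookrightarrow L^{n/(n-1)}(\Omega)$ then upgrades this $L^1$-bound to an $L^{n/(n-1)}$-bound on $G_\beta(\udel,k)$, producing the gain factor required by the Moser scheme. Iterating along a geometric sequence of radii $R_j\downarrow R/2$ and exponents $\beta_j\uparrow\infty$ yields a $\delta$-uniform estimate
\[
\|\udel\|_{L^\infty(B_{R/2})}\le C\bigl(n,\nu,R,\|\udel\|_{L^1(B_{2R})}\bigr),
\]
and passage to the limit along the BV-convergent subsequence transfers the bound to $u^\ast$.

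The principal technical obstacle is the degeneracy of the ellipticity in \gr{growth}: since $D^2F(P)$ decays like $1/(1+|P|)$, the natural coercivity extracted from the Euler--Lagrange test only controls quantities of $L^1$-scale in the gradient (essentially $|\nabla\udel|/(1+|\nabla\udel|)$) rather than $L^2$-scale ones. This forces the weaker endpoint $BV\hookrightarrow L^{n/(n-1)}$ embedding to drive the iteration, and makes the tracking of the $\beta$-dependence in the Caccioppoli constants a delicate matter. In the vector-valued situation the alignment of $DF(\nabla\udel)$ with the dominant gradient piece of $\nabla\varphi$ depends essentially on the radial structure of Remark \ref{rm1.2}; without it, even selecting a workable test function would be problematic.
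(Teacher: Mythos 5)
Your overall strategy matches the paper's: regularize, test the Euler--Lagrange equation against $\eta$-weighted powers of $|\udel|$, use the coercivity $DF(P):P\ge\nu_1|P|-\nu_2$ together with boundedness of $DF$ and the radial structure from Remark \ref{rm1.2} to obtain an $L^1$-scale Caccioppoli estimate, feed it through $BV\hookrightarrow L^{n/(n-1)}$, iterate, and pass to the BV-limit. The observation in your final paragraph that the degenerate ellipticity \eqref{growth} forces the endpoint $L^1$--$BV$ embedding to drive the scheme is exactly right.

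There is, however, a genuine gap in your choice of regularization. You take $F_\delta(P)=F(P)+\tfrac{\delta}{2}|P|^2$, so that $DF_\delta(\nabdel)=DF(\nabdel)+\delta\nabdel$, and the Euler equation produces the cross term $2\delta\intom\eta|\udel|^s\,\nabdel:(\udel\otimes\nabla\eta)\dx$. Unlike the analogous term coming from $DF$, this cannot be bounded by $c\intom|\udel|^{s+1}\eta|\nabla\eta|\dx$ because $\delta\nabdel$ is \emph{not} uniformly bounded: the only uniform information is $\delta\intom|\nabdel|^2\dx\le C$, which is of the wrong homogeneity once the weight $|\udel|^s$ enters. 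Absorbing by Young's inequality trades the problem for a term $c\,\delta\intom|\udel|^{s+2}|\nabla\eta|^2\dx$ whose exponent $s+2$ breaks the recursion and whose $\delta$-prefactor does not render it uniformly negligible as $s$ grows. So the asserted $\delta$-independent constant in your Caccioppoli estimate is not justified, and this is precisely the ``inhomogeneity between the linear and the quadratic term'' that the paper flags in Section 2 as its reason for abandoning the quadratic regularization. The paper's fix is to regularize instead by $\delta F_\mu$ with $F_\mu$ a $\mu$-elliptic integrand of \emph{linear} growth (see \eqref{G3'}--\eqref{growth'}): then $|DF_\mu|$ is bounded by a constant, the cross term from the $\delta$-part is controlled exactly like the one from $DF$, and the constant $c$ in \eqref{3.15} is uniform in $\delta$; meanwhile $\mu$-ellipticity with $1<\mu<1+\tfrac{2}{n}$ still gives the $C^{1,\alpha}$-regularity of $\udel$ needed to legitimize the test functions (Lemma \ref{lem2.1}~a)). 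If you replace your quadratic penalization by this linear one, the remainder of your argument goes through essentially as written; the truncated powers $(|\udel|^2-k^2)_+^{\beta/2}$ are not needed, the plain powers $|\udel|^s$ suffice.
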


\begin{remark}
Note that we merely impose (\ref{G2}) on the boundary data. If we assume $u_0 \in L^\infty (\O,\R^N)$, then any solution $u$ of (\ref{G7}) is in the space $L^\infty (\O,\R^N)$,  which follows from the results in \cite{BF6}.
\end{remark}
\noindent Next, we look at a variational problem originating in the work of Rudin, Osher and Fatemi \cite{ROF} on the denoising of images. To be precise, we assume that $n=2$, $N=1$ and consider a measurable subset (``the inpainting region'') $D$ of $\O\subset\R^2$ such that
\begin{equation}
\label{G10}
0 \le {\cal {L}}^2 (D) < {\cal{L}}^2 (\O)\, ,
\end{equation}
where ${\cal{L}}^2 (D) = 0$ corresponds to the case of ``pure denoising''. Moreover, we consider given (noisy) data $f : \O - D \to \R$ such that
\begin{equation}
\label{G11}
f \in L^{2} (\O- D)
\end{equation}
and pass to the problem
\begin{equation}
\label{G12}
K [w] := \IntO F (\nabla w) \dx + \lambda \int_{\O - D} |w - f|^2 \dx \to  \min \ \mbox{in} \ W^{1,1} (\O) \, ,
\end{equation}
where $\lambda > 0$ is some parameter and $F$ satisfies (\ref{G3}) - (\ref{growth}). The problem (\ref{G12}) can be regarded as a model for the inpainting of images combined with simultaneous denoising. The relaxed version of (\ref{G12}) reads as 
\begin{eqnarray}
\label{G13}
\widetilde{K} [w] &: =& \IntO F \left(\nabla^a w\right) \dx + \IntO F^\infty \left(\frac{\nabla^s w}{|\nabla^s w|}\right) \mathrm{d} \left|\nabla^s w\right| \\[2ex]
&&+ \lambda \int_{\O - D}  |w - f|^2 \dx  \to \min \ \mbox{in} \ \mbox{BV} (\O) \, , \nonumber
\end{eqnarray}
and concerning the regularity of solutions of (\ref{G13}) we obtained in \cite{BFT}, Theorem 2:
\begin{theorem}
Consider a density $F$ as in Theorem 1.1 for which (\ref{G8}) holds with $\mu < 2$. Moreover, we replace (\ref{G11}) with the stronger condition $f \in L^\infty (\O - D)$. Then the problem (\ref{G13}) (and thereby  (\ref{G12})) admits a unique solution $u$ for which we have interior $C^{1, \alpha}$-regularity on the domain $\O$.
\end{theorem}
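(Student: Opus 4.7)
My plan is to obtain existence and uniqueness of the minimizer via the direct method in $BV$ combined with strict convexity, and to obtain the interior $C^{1,\alpha}$-regularity by approximating $\widetilde{K}$ with a family of uniformly elliptic, quadratic-growth functionals and deriving uniform a priori estimates of Caccioppoli/De Giorgi/Moser type.

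For existence, the linear growth lower bound (\ref{G5}) gives
\[
\nu_1 |\nabla w|(\Omega) - \nu_2 \mathcal{L}^2(\Omega) + \lambda \int_{\Omega - D} |w - f|^2\, dx \le \widetilde{K}[w].
\]
Combined with a Poincaré-type inequality (exploiting $\mathcal{L}^2(\Omega - D) > 0$ by (\ref{G10})) and the continuous embedding $BV(\Omega) \hookrightarrow L^2(\Omega)$ valid for $n = 2$, this yields $BV$-coercivity. Weak-$*$ lower semicontinuity of the bulk and singular gradient parts is classical for convex integrands with linear growth, and weak-$L^2$ lower semicontinuity of the fidelity term along a $BV$-bounded minimizing sequence follows by extracting a weak $L^2$-subsequence. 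For uniqueness, two minimizers must agree a.e.\ on $\Omega - D$ by strict convexity of $w \mapsto |w - f|^2$; the strict convexity of $F$ implied by (\ref{G8}) then forces their absolutely continuous gradients to coincide, and matching of traces on $\partial D \cap \Omega$ from outside together with gradient equality propagates the identity into $D$.

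For the interior $C^{1,\alpha}$-regularity I would introduce for $\delta \in (0, 1]$ the regularized density $F_\delta(P) := F(P) + \tfrac{\delta}{2}|P|^2$ and the corresponding strictly convex, quadratic-growth problem
\[
K_\delta[w] := \int_\Omega F_\delta(\nabla w)\, dx + \lambda \int_{\Omega - D} |w - f|^2\, dx \to \min \quad \text{in } W^{1,2}(\Omega).
\]
Standard elliptic theory furnishes unique smooth minimizers $u_\delta$. A uniform local $L^\infty$-bound on $u_\delta$ follows from the Moser-type result of \cite{BF6}, crucially using $f \in L^\infty(\Omega - D)$. Differentiating the Euler--Lagrange equation
\[
\int_\Omega \bigl(DF(\nabla u_\delta) + \delta \nabla u_\delta\bigr)\cdot \nabla \varphi\, dx + 2\lambda \int_{\Omega - D}(u_\delta - f)\varphi\, dx = 0
\]
and testing with powers of $|\nabla u_\delta|^2$ against a cutoff produces a Caccioppoli inequality whose iteration under the $\mu$-ellipticity (\ref{G8}) with $\mu < 2$ gives $\|\nabla u_\delta\|_{L^\infty(\Omega')} \le C(\Omega')$ uniformly in $\delta$ for each $\Omega' \Subset \Omega$. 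A DiBenedetto--Friedman/Uhlenbeck argument then upgrades this to uniform $C^{1,\alpha}_{\loc}$-bounds; passing $\delta \to 0$ along a subsequence yields a limit in $C^{1,\alpha}_{\loc}(\Omega)$ which is a minimizer of (\ref{G13}), and the uniqueness established above identifies it with $u$.

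The principal obstacle is the uniform $L^\infty_{\loc}$-gradient estimate for $u_\delta$ as $\delta \to 0$. The Caccioppoli-iteration exponents depend sensitively on the interplay between $\mu$ (controlling the rate at which the ellipticity degenerates in $|P|$) and the dimension $n = 2$: the hypothesis $\mu < 2$ is precisely what is required for the Moser scheme to close, starting from the integrability given by (\ref{G5}) and the $L^\infty$-bound on $u_\delta$. Without this threshold one only obtains higher integrability of $\nabla u_\delta$ rather than a uniform $L^\infty$-bound, and the limit need not be $C^{1,\alpha}$; the $L^\infty$-assumption on $f$ enters in obtaining the $L^\infty$-bound on $u_\delta$ itself and in absorbing the fidelity contribution to the linearized equation.
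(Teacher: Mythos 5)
This statement (Theorem~1.3) is not proved in the present paper: it is quoted verbatim as Theorem~2 of the reference \cite{BFT}, so there is no in-paper argument to compare against. That said, the strategy you sketch --- quadratic $\delta$-regularization, uniform local $L^\infty$-bound on $u_\delta$ exploiting $f\in L^\infty(\Omega-D)$, a gradient Caccioppoli/Moser scheme that closes precisely when $\mu<2$ in two dimensions, and a DiBenedetto--Friedman/Uhlenbeck upgrade to $C^{1,\alpha}_{\mathrm{loc}}$ --- is indeed the standard Bildhauer--Fuchs route used in \cite{BFT}, and your existence part (coercivity via $BV(\Omega)\hookrightarrow L^2(\Omega)$ for $n=2$ together with a Poincar\'e inequality on $\Omega-D$, Reshetnyak-type lower semicontinuity for the gradient terms, weak-$L^2$ lower semicontinuity for the fidelity term) is also the expected argument.

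The one genuine flaw is in the uniqueness step, specifically the treatment of the singular part of the measure gradient. You argue that strict convexity forces $\nabla^a u_1=\nabla^a u_2$ and that ``matching of traces on $\partial D\cap\Omega$ from outside together with gradient equality propagates the identity into $D$.'' That reasoning is correct for $W^{1,1}$ functions, but for $BV$ competitors it does not exclude $u_1-u_2$ being a nonzero, purely singular $BV$ function supported inside $D$: such a function has vanishing absolutely continuous gradient, is zero a.e.\ on $\Omega-D$, and its jump or Cantor part is invisible to any trace-matching argument. Strict convexity of $F$ does not help here either, since the recession function $F^\infty$ is positively $1$-homogeneous and therefore never strictly convex, so it does not force the singular parts of $\nabla u_1$ and $\nabla u_2$ to agree. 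The fact that actually does the work is the strict positivity $F^\infty(P)\geq\nu_1|P|$ inherited from the lower bound in \gr{G5}. Concretely, once the $\delta$-regularization yields a minimizer $u^\ast\in W^{1,1}(\Omega)$, take any $BV$ minimizer $u$; strict convexity gives $\nabla^a u=\nabla u^\ast$ a.e.\ and $u=u^\ast$ a.e.\ on $\Omega-D$, whence
\[
\widetilde{K}[u]=\widetilde{K}[u^\ast]+\int_\Omega F^\infty\!\left(\tfrac{\nabla^s u}{|\nabla^s u|}\right)\mathrm{d}|\nabla^s u|,
\]
and minimality plus $F^\infty\geq\nu_1|\cdot|$ force $|\nabla^s u|=0$. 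Then $u-u^\ast\in W^{1,1}(\Omega)$ has zero gradient and vanishes on a set of positive measure, so connectedness of $\Omega$ gives $u=u^\ast$. Inserting this step in place of the trace-matching heuristic makes your uniqueness argument sound; the remainder of the plan matches the cited source.
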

\begin{remark}
The result of Theorem 1.3 extends to domains $\O$ in $\rn$ with $n \ge 3$, where we might even include the vector case of functions $u : \O \to \RN$, provided we have (\ref{G9}) in case $N>1$. We refer to \cite{Ti}. The reader should also note that boundedness of the data $f$ implies the boundedness of solutions to (\ref{G13}) (see, e.g., \cite{BF2}).
\end{remark}
\begin{remark}
In the paper \cite{BFMT} the reader will find some intermediate regularity results for solutions $u$ of (\ref{G13}) saying that even without the assumption $f \in L^\infty (\O - D)$ the solution $u$ belongs to some Sobolev class. With respect to these results we can even replace the ``data term'' $ \int_{\O- D} |u - f|^2 \dx$ by more general expressions (with appropriate variants of (\ref{G11})), however, in any case $\mu$-ellipticity (\ref{G8}) together with an upper bound on $\mu$ is required.
\end{remark}
\begin{remark}
The counterexamples from \cite{FMT} show that for $\mu > 2$ we can not in general hope for the solvability of problem (\ref{G12}), which means that for these examples any solution $u$ of (\ref{G13}) belongs to $\mbox{BV} (\O) - W^{1,1} (\O)$.
\end{remark}
In the spirit of Theorem 1.2 we have the following weak regularity result for problem (\ref{G13}).
\begin{theorem}
Let (\ref{G3}) - (\ref{growth}) hold, let $D$ satisfy (\ref{G10}), suppose that $n=2$, $N=1$ and consider data $f$ with (\ref{G11}). Then problem (\ref{G13}) admits a solution $u$ in the space $\mbox{BV} (\O) \cap L^\infty_{\loc} (\O)$, which is unique in the case of pure denoising (i.e. $D = \emptyset$).
\end{theorem}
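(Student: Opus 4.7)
The strategy is to transpose the line of attack used for Theorem 1.2 to the present setting, treating the quadratic data term as a lower-order perturbation, and to obtain the $L^\infty_\loc$-bound by a Moser-type iteration carried out on a non-degenerate regularization of $K$. We proceed in three steps: existence, local boundedness, uniqueness.

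\textbf{Existence.} Take a minimizing sequence $\{u_k\}\subset BV(\O)$ for $\widetilde K$. By \gr{G5} we have $F(P)\ge\nu_1|P|-\nu_2$ and $F^\infty(Q)\ge\nu_1|Q|$, so the bulk and singular parts of $\widetilde K[u_k]$ bound $|Du_k|(\O)$ uniformly, while the data term bounds $\|u_k\|_{L^2(\O-D)}$. Since $\mathcal L^2(\O-D)>0$, the BV-Poincaré inequality together with the $n=2$ Sobolev embedding $BV(\O)\hookrightarrow L^2(\O)$ upgrades this to a full $BV(\O)$-bound. By compactness, a subsequence satisfies $u_k\to u$ in $L^1(\O)$, $Du_k\overset{*}{\rightharpoonup}Du$, and $u_k\rightharpoonup u$ weakly in $L^2(\O-D)$. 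Lower semicontinuity of the relaxed integral functional (cf.\ \cite{AFP}) and of the convex quadratic term then yields that $u$ is a minimizer.

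\textbf{Local boundedness.} Set $F_\delta(P):=F(P)+\delta(1+|P|^2)$ and let $u_\delta\in W^{1,2}(\O)$ be the unique minimizer of $K_\delta[w]:=\intom F_\delta(\nabla w)\dx+\lambda\int_{\O-D}|w-f|^2\dx$. Standard theory yields $u_\delta\in W^{2,2}_\loc(\O)$, satisfying
$$\intom DF_\delta(\nabla u_\delta)\cdot\nabla\varphi\dx+2\lambda\int_{\O-D}(u_\delta-f)\varphi\dx=0$$
for compactly supported $\varphi\in W^{1,2}(\O)$. The coercivity argument of the previous step gives a uniform $BV$-bound on $\{u_\delta\}$, and passing to a subsequence $u_\delta\to u$ in $L^1$ produces a minimizer $u$ of $\widetilde K$; thus it suffices to establish local $L^\infty$-estimates on $u_\delta$ uniform in $\delta$. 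Testing the Euler-Lagrange equation with $\varphi=\eta^2(u_\delta-k)_+^\beta$, where $\eta\in C_c^\infty(B_{2R})$ with $\eta\equiv 1$ on $B_R\subset\subset\O$, $k\ge 0$, $\beta\ge 1$, and using \gr{G4}, \gr{G6} together with the fact that the $\delta$-derivatives contribute a nonnegative term on the left, one obtains a Caccioppoli-type inequality
$$\beta\int_{\{u_\delta>k\}}\eta^2(u_\delta-k)_+^{\beta-1}|\nabla u_\delta|\dx\le c\int\eta|\nabla\eta|(u_\delta-k)_+^\beta\dx+c\lambda\int_{\O-D}(|u_\delta|+|f|)\eta^2(u_\delta-k)_+^\beta\dx,$$
with constants independent of $\delta$. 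Splitting $|u_\delta|\le k+(u_\delta-k)_+$ on $\{u_\delta>k\}$ and applying the embedding $W^{1,1}(B_{2R})\hookrightarrow L^2(B_{2R})$ to $\eta^2(u_\delta-k)_+^{(\beta+1)/2}$, the quadratic data contribution is absorbable into the leading $BV$-term provided $k$ is taken large enough in terms of $\|f\|_{L^2(\O-D)}$. This yields a reverse-Hölder step of Moser form, which iterated over a geometric sequence of radii $R_j\downarrow R$ gives
$$\sup_{B_R}|u_\delta|\le C\bigl(R,\|f\|_{L^2(\O-D)},\nu,\lambda\bigr)\bigl(1+\|u_\delta\|_{L^1(B_{2R})}\bigr)$$
uniformly in $\delta$. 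A symmetric treatment of $(k-u_\delta)_+$ and the passage $\delta\to 0$ give $u\in L^\infty_\loc(\O)$.

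\textbf{Uniqueness and main obstacle.} When $D=\emptyset$, the term $\lambda\intom|w-f|^2\dx$ is strictly convex on $L^2(\O)$ and the rest of $\widetilde K$ is convex, so two minimizers $u_1,u_2$ would give $\widetilde K[\tfrac12(u_1+u_2)]<\tfrac12(\widetilde K[u_1]+\widetilde K[u_2])$ unless $u_1=u_2$ a.e., contradicting minimality. The main difficulty is the Moser iteration: it must be executed under linear growth alone, with no quantitative lower bound on $D^2F$, and the quadratic data term must be absorbed into the $BV$-energy using that $n=2$ is precisely the dimension at which $BV\hookrightarrow L^2$. This critical matching between the dimension, the integrability of $f$, and the linear growth of $F$ is what makes the argument close.
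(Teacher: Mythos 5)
Your existence argument (direct method with $BV\hookrightarrow L^2$ in dimension two, lower semicontinuity of the relaxed functional) and your uniqueness argument (strict convexity of the quadratic fidelity term when $D=\emptyset$) are both sound and in line with what the paper does implicitly through Lemma~2.2. The local boundedness step, however, diverges from the paper's route in two places that create genuine gaps.

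First, you regularize with $\delta(1+|P|^2)$, which is precisely what the paper deliberately avoids: the authors note that "a quadratic regularization and the resulting inhomogeneity between the linear and the quadratic term causes some difficulties" and instead add a term $\delta F_\mu(\nabla w)$ with $F_\mu$ of linear growth and $\mu$-elliptic (inequalities \gr{G3'}--\gr{growth'}). The reason this matters is exactly the place where you wave your hands: testing the Euler equation with $\varphi=\eta^2(u_\delta-k)_+^\beta$, the $\delta$-part produces not only the good term $2\delta\beta\int\eta^2(u_\delta-k)_+^{\beta-1}|\nabla u_\delta|^2\dx$ but also the cross term $4\delta\int\eta\nabla\eta\cdot\nabla u_\delta\,(u_\delta-k)_+^\beta\dx$, which has no sign. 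Your statement that "the $\delta$-derivatives contribute a nonnegative term on the left" is therefore false; the cross term must be estimated, and with the quadratic regularization it carries a factor $\delta|\nabla u_\delta|$ which is not uniformly bounded, only $\delta^{1/2}$-small in $L^2$. With the paper's linear $F_\mu$, by contrast, $|DF_\mu|$ is a bounded quantity (see \gr{G4} and the analogue for $F_\mu$), so the cross term is simply $\le c\,\delta\int\eta|\nabla\eta|\,(u_\delta-k)_+^\beta\dx$ and can be merged with the corresponding term from $F$. To salvage your approach you would have to close this loop with a Young-plus-interpolation argument and the uniform $L^2$-bound on $u_\delta$; as written, the Caccioppoli inequality is not established. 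The paper also truncates the data $f\to f_\delta$ (so that the cited interior regularity theory for the $\mu$-elliptic regularized problems applies), a step you omit.

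Second, the mechanism by which the data term is absorbed is different, and your version is phrased in a way that does not work. The paper tests with $\varphi=|\wudel|^s\wudel\eta^2$, which after the Sobolev step produces $\big(\intom|\wudel|^{2(s+1)}\eta^4\dx\big)^{1/2}$ on the \emph{left}, exactly matching the Hölder estimate $2\lambda\big(\int_{B_{R_0}-D}|f|^2\dx\big)^{1/2}\big(\intom|\wudel|^{2(s+1)}\eta^4\dx\big)^{1/2}$ of the data term on the \emph{right}; absorption is then achieved by choosing $R_0$ so small that $\int_{B_{R_0}-D}|f|^2\dx<\varepsilon_0$ via absolute continuity of the integral — note this is a smallness-of-radius condition, not a largeness-of-$k$ condition, and $\|f\|_{L^2(\O-D)}$ itself is not small. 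Your claim that the data contribution is "absorbable ... provided $k$ is taken large enough in terms of $\|f\|_{L^2(\O-D)}$" misidentifies the relevant smallness: what one can extract from $k$ large is a small-measure superlevel set (via Chebyshev and the uniform $L^2$-bound on $u_\delta$), and one would then still have to invoke absolute continuity of $\int|f|^2$ over sets of small measure — not the size of the global norm. In addition, the exponent $(\beta+1)/2$ you choose for the Sobolev step does not match the Caccioppoli left-hand side produced by the test function $\eta^2(u_\delta-k)_+^\beta$ (whose good term carries the power $(u_\delta-k)_+^{\beta-1}$, corresponding to $v=\eta^2(u_\delta-k)_+^\beta$, not $(u_\delta-k)_+^{(\beta+1)/2}$), so the claimed reverse-Hölder step is not correctly aligned. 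In short, the skeleton of the argument is right — regularize, test with powers, Moser-iterate, absorb the $f$-term by smallness — but the specific regularization, the sign claim on the $\delta$-term, the exponent bookkeeping, and the source of smallness in the absorption all need to be repaired in the way the paper does it.
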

\noindent Our paper is organized as follows: in Section 2 we introduce a new type of linear regularization of the problems (\ref{G7}) and (\ref{G13}) by means of $\mu$-elliptic functionals including results on the regularity and the convergence properties of the family of approximate solutions $\udel$. In Section 3 we then derive local uniform bounds of the type
\begin{equation}
\label{G14}
\sup_{\delta > 0} \|u_\delta\|_{L^\infty (\O^\ast,\R^N)} \le c (\O^\ast) < \infty
\end{equation}
for subdomains $\O^\ast \subset \subset \O$ by a Moser-type iteration procedure, which yields the result of Theorem 1.2 by passing to the limit $\delta \downarrow 0$. In the last section we will deduce the statement of Theorem 1.4 from the proof of Theorem 1.2.

\section{$\mu$-elliptic regularization}
In the context of variational problems of linear growth, it is a common approach to consider a sequence of regularizing functionals whose minimizers are sufficiently smooth and converge to a solution of the actual problem. In our previous works (cf. e.g. \cite{BF1,BF2,BF3,FMT}) this  was achieved by adding a Dirichlet term $\delta\intom |\nabla w|^2\dx$ for a decreasing sequence $\delta\downarrow 0$. For fixed $\delta$, we then deal with a quadratic elliptic functional and therefore have the well developed machinery for this type of problems, as it is e.g. outlined in the classical monograph \cite{GT}. However, in the situation of Theorems 1.2 and 1.4, a quadratic regularization and the resulting inhomogeneity between the linear and the quadratic term causes some difficulties. We therefore prefer to work with a linear regularization, for which the notion of $\mu$-ellipticity (cf. \gr{G8}) turns out to be the correct framework in terms of existence and regularity of approximating solutions. Let us first consider the situation of Theorem 1.2, where, just for technical simplicity, we replace (\ref{G2}) by the requirement that 
\begin {equation}
\label{H1}
u_0 \in W^{1,p} (\O,\R^N)\, 
\end{equation}
for some $p>1$. We would like to note that the limit case $p=1$ can be included via a suitable approximation (cf. \cite{BF7} and in particular the work \cite{Bi2}, where the approximation is made explicit in the two-dimensional case). We may therefore actually drop (\ref{H1}) and return to the original hypothesis (\ref{G2}). Now for $0 < \delta < 1$ let
\begin{equation}
\label{H2}
J_\delta [w] :=\delta \IntO F_\mu(\nabla w)\dx + J [w] \to  \min \ \mbox{in} \ u_0 + \weenull(\O,\R^N),
\end{equation}
where $F_\mu:\R^{N\times n}\rightarrow [0,\infty)$ is chosen to satisfy
\begin{gather}
\label{G3'} F_\mu \in C^2 (\R^{N\times n})\text{ and  (w.l.o.g.) } \ F_\mu (0) = 0\, ;\\
\label{G5'} \widetilde{\nu}_1 |P| - \widetilde{\nu}_2 \le F_\mu (P) \le \widetilde{\nu}_3|P| + \widetilde{\nu}_4;\\
\label{growth'} \widetilde{\nu}_5\left(1 + |P|\right)^{- \mu} |Q|^2 \leq D^2F_\mu(P)(Q,Q)\leq \widetilde{\nu}_6\frac{1}{1+|P|}|Q|^2,
\end{gather}
with suitable constants $\widetilde{\nu}_1, \widetilde{\nu}_3, \widetilde{\nu}_5,\widetilde{\nu}_6 > 0, \ \widetilde{\nu}_2, \widetilde{\nu}_4 \ge 0$, some $\mu\in (1,\infty)$ and for all $P,Q\in\R^{N\times n}$. Again we set
\[
 \widetilde{\nu}:=\big(\widetilde{\nu}_1,...,\widetilde{\nu}_6\big).
\]
We further note that the above assumptions imply                                                                                                                                                                                                                                                                                                                                                                                                                      \begin{align}\label{muconst}
DF_\mu(P):P\geq \widetilde{\nu}_1|P|-\widetilde{\nu}_2,\quad P\in \R^{N\times n}.
\end{align}
If the vector case $N>1$ is considered, we impose a structure condition on $F_\mu$ in the spirit of \gr{G9}, i.e.
\[
F_\mu(P)=\widetilde{F}_\mu\big(|P|\big)
\]
for some $\mu$-elliptic function $\widetilde{F}_\mu:\R\to\R$, implying the above assumptions for $F_\mu$ (compare the appendix).
A convenient choice for $F_\mu$ is e.g. given by
\[
F_\mu(P)=\Phi_\mu\big(|P|\big),
\]
where  $\Phi_\mu$ is defined by
\begin{align*}
\Phi_\mu(r):=\int_0^r\int_0^s(1+t)^{-\mu}\,dt\,ds,\;r\geq 0,
\end{align*}
which means
\begin{align*}
\left\{\begin{aligned}
 &\Phi_\mu(r)=\frac{1}{\mu-1}r+\frac{1}{\mu-1}\frac{1}{\mu-2}(r+1)^{-\mu+2}-\frac{1}{\mu-1}\frac{1}{\mu-2},\;\mu\neq 2,\\
 \,\\
 &\Phi_2(r)=r-\ln(1+r),\;r\geq 0.
\end{aligned}\right.
\end{align*}
\begin{lemma}\label{lem2.1}
If we fix $1<\mu<1+\frac{2}{n}$, then we have:
\begin{enumerate}
\item[a)] Problem \gr{H2} admits a unique  solution $\udel\in u_0+\weenull(\Omega,\R^N)$. It even holds (not necessarily uniformly with respect to $\delta$) $\udel\in C^{1,\alpha}(\Omega,\R^N)$.
\item[b)] $\displaystyle\sup_\delta \IntO |\nabla u_\delta| \dx < \infty$\, ;
\item[c)] $\displaystyle\IntO D F_{\delta,\mu} (\nabla u_\delta) \cdot \nabla \varphi \dx = 0 $ for any  $\varphi \in \weenull(\O,\R^N), \ F_{\delta,\mu} (P) :=\delta F_\mu(P)+ F (P)$\, .
\item[d)] Each $L^1$-cluster point of the family $u_\delta$ is a solution of problem (\ref{G7}).
\end{enumerate}
\end{lemma}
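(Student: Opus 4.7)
The plan is to treat the four claims in sequence, exploiting that the combined integrand $F_{\delta,\mu}(P):=\delta F_\mu(P)+F(P)$ is itself a $\mu$-elliptic integrand of linear growth: summing \gr{growth} and \gr{growth'} yields
\[
\delta\widetilde{\nu}_5(1+|P|)^{-\mu}|Q|^2 \;\leq\; D^2F_{\delta,\mu}(P)(Q,Q) \;\leq\; (\delta\widetilde{\nu}_6+\nu_5)\frac{|Q|^2}{1+|P|},
\]
with $\delta$-dependent constants on the left, and the structural hypotheses \gr{G3}, \gr{G5}, \gr{G3'}, \gr{G5'} pass to $F_{\delta,\mu}$ as well.

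For part a), strict convexity of $F_\mu$ (from the lower inequality in \gr{growth'}) guarantees uniqueness in any class in which the minimizer is attained. Existence of $\udel\in u_0+\weenull(\O,\R^N)$ together with the $C^{1,\alpha}$--regularity is obtained by the $\mu$-elliptic regularization scheme of \cite{Bi}: add a small quadratic Dirichlet term $\tau\intom|\nabla w|^2\dx$, produce a unique $W^{1,2}$--minimizer $u_{\delta,\tau}$ by direct methods, derive $\tau$-independent higher-integrability estimates in the range $1<\mu<1+\frac{2}{n}$ (this is precisely the threshold under which $W^{1,p}_{\loc}$--bounds with $p>1$ remain available for $\mu$-elliptic linear-growth problems), and pass to the limit $\tau\downarrow 0$. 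The interior $C^{1,\alpha}$--regularity of $\udel$ then follows from the standard regularity theory for non-uniformly elliptic quasilinear systems in this $\mu$-range, cf.\ Chapters 3--4 of \cite{Bi}.

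For part b), testing the minimality of $\udel$ against the competitor $u_0$ and using \gr{G5} and \gr{G5'} gives
\[
\nu_1\intom|\nabdel|\dx-\nu_2|\O|\;\leq\; J[\udel]\;\leq\; J_\delta[\udel]\;\leq\; J_\delta[u_0]\;\leq\; C,
\]
with $C$ independent of $\delta\in(0,1)$, since $u_0\in W^{1,1}(\O,\R^N)$ and both $F,F_\mu$ have linear growth. Part c) is the usual Euler--Lagrange computation: $DF_{\delta,\mu}$ is globally bounded by the argument of Remark \ref{rm1.1} applied to each summand, which together with the $C^1$--regularity of $\udel$ licenses differentiation under the integral sign in $\frac{d}{dt}\big|_{t=0}J_\delta[\udel+t\varphi]=0$.

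For part d), (b) together with Poincaré in $\weenull$ and BV--compactness ensures that $L^1$--cluster points exist and lie in $\mbox{BV}(\O,\R^N)$. Fix a subsequence $\delta_k\downarrow 0$ with $u_{\delta_k}\to u$ in $L^1$, and let $v\in\mbox{BV}(\O,\R^N)$. Since $\Jpe$ is the $L^1$--lower semicontinuous envelope of $J$ on $u_0+\weenull$ (Theorem 5.47 of \cite{AFP}), one can select $v_j\in u_0+\weenull(\O,\R^N)$ with $v_j\to v$ in $L^1$ and $J[v_j]\to\Jpe[v]$. Minimality of $u_{\delta_k}$ combined with $\delta_k\intom F_\mu(\nabla v_j)\dx\to 0$ as $k\to\infty$ for fixed $j$ then yields
\[
\Jpe[u]\;\leq\;\liminf_{k\to\infty}J[u_{\delta_k}]\;\leq\;\liminf_{k\to\infty}J_{\delta_k}[u_{\delta_k}]\;\leq\;\limsup_{k\to\infty}J_{\delta_k}[v_j]\;=\;J[v_j],
\]
and sending $j\to\infty$ gives $\Jpe[u]\leq\Jpe[v]$. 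The main obstacle is part a): producing a \emph{Sobolev} (not merely BV) minimizer of $J_\delta$ together with its $C^{1,\alpha}$--regularity rests critically on the restriction $\mu<1+\frac{2}{n}$ and on the higher-integrability machinery for $\mu$-elliptic functionals imported from \cite{Bi}. Once a) is granted, parts b)--d) reduce to essentially standard direct-method and relaxation arguments.
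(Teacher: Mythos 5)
Your proposal is correct and its structure mirrors the paper's own (short) proof: for a) one observes that $F_{\delta,\mu}$ is $\mu$-elliptic and invokes the existing $\mu$-ellipticity theory (the paper cites \cite{BF8}, you sketch the underlying regularization scheme from \cite{Bi}); b) and c) follow from minimality exactly as you write; and d) proceeds via $BV$-compactness and lower semicontinuity of $\Jpe$. The one place where you take a genuinely different route is in closing part d): the paper proves $\Jpe[\overline u]\le J[v]$ for all $v\in u_0+\weenull(\O,\R^N)$ and then invokes the result of \cite{BF9} that $\Jpe$-minimizers are exactly the $L^1$-limits of $J$-minimizing sequences, whereas you use the recovery-sequence characterization of the relaxation (for each $v\in BV$ pick $v_j\in u_0+\weenull$ with $J[v_j]\to\Jpe[v]$) and a double limit $k\to\infty$, then $j\to\infty$. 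Both arguments are valid and appeal to the same relaxation structure; yours avoids a specific external citation at the cost of an extra diagonal step, while the paper's is a one-line reduction once \cite{BF9} is in hand. Two small remarks: the paper's proof has a typo, labelling the argument for d) as ``part c)''; and your opening display for $D^2F_{\delta,\mu}$ is correct, but note that the lower ellipticity constant $\delta\widetilde\nu_5$ degenerates as $\delta\downarrow 0$ — this is consistent with the paper's parenthetical ``not necessarily uniformly with respect to $\delta$'' in a).
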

\begin{proof}
It is easy to see that under our assumptions on $F$ the density $F_{\delta,\mu}$ is $\mu$-elliptic itself in the sense of \gr{G8}, so that we may cite the results from \cite{BF8} for part a). Part b) and c) are clear from the fact that $u_\delta$ minimizes $J_\delta$. For part c) we observe that due to b) and the $BV$-compactness property (see Theorem 3.23 on p. 132 in \cite{AFP}), there exists a function $\overline{u}\in BV(\Omega,\R^N)$ such that $u_\delta\rightarrow \overline{u}$ in $L^1(\Omega)$ for some sequence $\delta\downarrow 0$. Thanks to the lower semicontinuity of the functional $\Jpe$ from \gr{G7}, it follows
\[
\Jpe[\overline{u}]\leq \liminf_{\delta\rightarrow 0}\Jpe[\udel]=\liminf_{\delta\rightarrow 0}J[\udel]\leq \liminf_{\delta\rightarrow 0}J_\delta[\udel]\leq \liminf_{\delta\rightarrow 0}J_\delta[v]=J[v],
\]
where $v\in u_0+\weenull(\Omega,\R^N)$ is arbitrary. But since in \cite{BF9} it was proved that the set of $\Jpe$-minimizers coincides with the set of all $L^1$-limits of $J$-minimizing sequences, the above chain of inequalities implies the claimed minimality.
\end{proof}

Next we consider the setting of Theorem 1.4. Keep in mind that in this situation we restrict ourselves to $n=2$ and $N=1$. Since we merely assume $f\in L^2(\Omega-D)$, we need to ``cut-off'' the data in order to obtain a sufficiently smooth approximation. This means that for $\delta\in (0,1)$ we set
\[
f_\delta:\Omega-D\rightarrow\R,\,f_\delta(x):=\left\{\begin{aligned}f(x),&\,\text{ if }\,|f(x)|\leq \delta^{-1}, \\ \delta^{-1},&\,\text{ if }\,|f(x)|> \delta^{-1}\end{aligned}\right.
\]
and consider the problem
\begin{align}\label{Kdel}
\begin{split}
K_\delta[w]:=\delta\intom F_\mu(\nabla w)\dx+ \IntO F (\nabla w) \dx + \lambda \int_{\O - D} &|w - f_\delta|^2 \dx\\
 &\to\min \text{ in }W^{1,1}(\Omega).
\end{split}
\end{align}

\begin{lemma}
If we fix $1<\mu<2$, then we have:
\begin{enumerate}
\item[a)] Problem \gr{Kdel} admits a unique  solution $\wudel\in W^{1,1}(\Omega,\R^N)$. It even holds (not necessarily uniformly with respect to $\delta$) $\wudel\in C^{1,\alpha}(\Omega,\R^N)$.
\item[b)] $\displaystyle\sup_\delta \IntO |\nabla \wudel| \dx < \infty$,\; $\displaystyle\sup_\delta \intomd |\wudel|^2 \dx < \infty$ ;
\item[c)] $\displaystyle\IntO D F_{\delta,\mu} (\nabla \wudel) \cdot \nabla \varphi \dx+\lambda\intomd (\wudel-f_\delta)\varphi\dx = 0$ for any $\varphi \in W^{1,1}(\O,\R^N)$,  

$F_{\delta,\mu} (p) :=\delta F_\mu(p)+ F (p)$.
\item[d)] Each $L^1$-cluster point of the sequence $\wudel$ is a solution of problem (\ref{G7}).
\end{enumerate}
\end{lemma}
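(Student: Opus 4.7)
The four assertions mirror those of Lemma~\ref{lem2.1}, with modifications dictated by the quadratic data term and by the truncation $f_\delta$ of $f$. I would treat them in order and then return to the subtlety in~d).

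For~a), since $1<\mu<2$ and $F_{\delta,\mu}=\delta F_\mu+F$ inherits the $\mu$-ellipticity of $F_\mu$ (with constants depending on $\delta$), the regularized functional $K_\delta$ satisfies exactly the hypotheses of Theorem~1.3 once one observes $f_\delta\in L^\infty(\Omega-D)$ by construction. Existence, uniqueness in $W^{1,1}(\Omega)$ and the $C^{1,\alpha}$-regularity therefore follow directly from that theorem (resp.\ the underlying arguments of \cite{BFT}); strict convexity of $F_{\delta,\mu}$ together with the strict convexity of the quadratic data term on $\Omega-D$ (nontrivial because of \gr{G10}) pins down uniqueness.

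For~b) I would test minimality of $\wudel$ against the competitor $w\equiv 0$. Since $F(0)=F_\mu(0)=0$ and $|f_\delta|\le|f|$ pointwise,
\[
\delta\intom F_\mu(\nabla\wudel)\dx+\intom F(\nabla\wudel)\dx+\lambda\intomd|\wudel-f_\delta|^2\dx\le\lambda\intomd|f|^2\dx.
\]
Using the lower bound in \gr{G5}, $F(P)\ge\nu_1|P|-\nu_2$, one reads off the uniform $L^1$-bound on $\nabla\wudel$; the triangle inequality together with $\|f_\delta\|_{L^2(\Omega-D)}\le\|f\|_{L^2(\Omega-D)}$ then yields the uniform $L^2$-bound of $\wudel$ on $\Omega-D$. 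Part~c) is the Euler--Lagrange equation for $K_\delta$; differentiation under the integral sign is justified by \gr{G4}, the analogous bound on $|DF_\mu|$ from \gr{G5'}, and $f_\delta\in L^\infty$.

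The substance of the lemma is~d). Given $\wudel\to\overline u$ in $L^1(\Omega)$ along a subsequence (extracted via~b) and $BV$-compactness), I would split the passage to the limit into two halves. For the lower bound, dominated convergence (with dominant $|f|\in L^2(\Omega-D)$) gives $f_\delta\to f$ in $L^2(\Omega-D)$, whence
\[
\intomd|\wudel-f_\delta|^2\dx-\intomd|\wudel-f|^2\dx\longrightarrow 0
\]
by Cauchy--Schwarz and the $L^2$-bound in~b); combined with the $BV$-lower semicontinuity of the linear-growth part of $\widetilde{K}$ from \gr{G13} and with the nonnegativity of $\delta\int F_\mu(\nabla\wudel)$, one concludes $\widetilde K[\overline u]\le\liminf_{\delta\downarrow 0}K_\delta[\wudel]$. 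For the upper bound, for any $v\in W^{1,1}(\Omega)$ the minimality of $\wudel$ gives $K_\delta[\wudel]\le K_\delta[v]$, and $K_\delta[v]\to\widetilde K[v]$ because $\delta\int F_\mu(\nabla v)\to 0$ and $\int|v-f_\delta|^2\to\int|v-f|^2$ by dominated convergence. Hence $\widetilde K[\overline u]\le\widetilde K[v]$ for every $W^{1,1}$-competitor $v$, and the extension to $v\in BV(\Omega)$ is supplied by the result of \cite{BF9} already used in Lemma~\ref{lem2.1}, namely that $\widetilde K$-minimizers coincide with $L^1$-limits of $K$-minimizing sequences.

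The main obstacle is precisely this last step: the linear-growth portion of $\widetilde K$ is only $L^1$-lower semicontinuous, not continuous, so minimality of $\overline u$ among $BV$-competitors does not pass directly from $L^1$-convergence of $\wudel$. One must therefore combine the lower semicontinuity on the limit side with the strict-type approximation from \cite{BF9} to reach general $v\in BV(\Omega)$, while simultaneously controlling the discrepancy between $f_\delta$ and $f$ in the data term; the interplay of these two approximations (truncation of $f$ and $\mu$-elliptic regularization) is where care is needed.
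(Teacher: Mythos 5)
Your argument tracks the paper's proof closely: a) by $\mu$-ellipticity of $F_{\delta,\mu}$ and $f_\delta\in L^\infty$ invoke \cite{BFT}; b), c) by minimality/Euler--Lagrange (your explicit test against $w\equiv 0$ with $|f_\delta|\le|f|$ is a clean way to make the paper's ``clear from minimality'' precise); d) via lower semicontinuity of $\widetilde K$, the chain $\widetilde K[\overline u]\le\liminf K_\delta[\wudel]\le\liminf K_\delta[v]$, and control of the data-term discrepancy using $f_\delta\to f$ in $L^2(\Omega-D)$ together with the uniform $L^2$-bound on $\wudel$.

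The one place you drift from the paper is the very last step. You attribute the passage from $W^{1,1}$-competitors to $BV$-competitors to ``the result of \cite{BF9}, namely that $\widetilde K$-minimizers coincide with $L^1$-limits of $K$-minimizing sequences.'' But \cite{BF9} is stated for the pair $(J,\widetilde J)$ from problem \gr{G7}, i.e.\ the functional with the boundary-penalty term, not for $(K,\widetilde K)$ with the quadratic fidelity term --- so it cannot be quoted verbatim here. The paper instead closes the argument with a density statement tailored to $\widetilde K$: every $w\in BV(\Omega)$ admits $w_k\in C^\infty(\Omega)\cap W^{1,1}(\Omega)$ with $\widetilde K[w]=\lim_k\widetilde K[w_k]$ (Lemmas~2.1 and~2.2 of \cite{FT}), which upgrades $\widetilde K[\overline u]\le K[v]$ for $v\in W^{1,1}$ to $\widetilde K[\overline u]\le\widetilde K[w]$ for all $w\in BV$. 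Your overall strategy is correct; just replace the appeal to \cite{BF9} by the $\widetilde K$-specific approximation result from \cite{FT} (or prove the analogue of \cite{BF9} for $K$, which amounts to the same approximation lemma).
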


\begin{proof}
Since  $f_\delta\in L^\infty(\Omega)$ for each fixed value of $\delta$, we are in the situation of \cite{BFT}, where we remark that the  density $F_{\delta,\mu}$ is $\mu$-elliptic  thanks to our assumptions on $F$. We can therefore apply the results of this work which give us the claim of part a). Parts b) and c) are once again clear from the minimality of the $\wudel$, where for the second bound in b) we have to make use of the fact that $f_\delta\to f$ in $L^2(\Omega-D)$. It thus remains to justify d). By the bounds of part b), the family $\wudel$ is bounded uniformly in $W^{1,1}(\Omega)$ and hence there exists an $L^1$-cluster point $\hat{u}\in BV(\Omega)$ of some sequence $\delta\downarrow 0$ due to the $BV$-compactness property. From the lower semicontinuity of the relaxation $\widetilde{K}$ it then follows for arbitrary $v\in W^{1,1}(\Omega)$
\begin{align*}
 \widetilde{K}[\hat{u}]&\leq \liminf_{\delta\downarrow 0}\widetilde{K}[\wudel]\leq \liminf_{\delta\downarrow 0} \left[K_\delta[\wudel]+\lambda\intomd \Big(|\wudel-f|^2-|\wudel-f_\delta|^2\Big)\dx\right]\\
 &\leq \liminf_{\delta\downarrow 0}\left[K_\delta[v]+\lambda\intomd \Big(|\wudel-f|^2-|\wudel-f_\delta|^2\Big)\dx\right]\\
&=K[v]+\liminf_{\delta\downarrow 0}\lambda\intomd \Big(|\wudel-f|^2-|\wudel-f_\delta|^2\Big)\dx\\
&=K[v]+\liminf_{\delta\downarrow 0}\intomd \Big(|f|^2-|f_\delta|^2+2\wudel(f_\delta-f)\Big)\dx=K[v],
\end{align*}
since $f_\delta\rightarrow f$ in $L^2(\Omega-D)$ and $\wudel$ is uniformly bounded in $L^2(\Omega-D)$. The claimed minimality of $\hat{u}$ now follows from the fact that any function $w\in BV(\Omega)$ can be approximated by a sequence $w_k\in C^\infty(\Omega)\cap W^{1,1}(\Omega)$ such that $\widetilde{K}[w]=\lim_{k\rightarrow\infty}\widetilde{K}[w_k]$ (cf. Lemma 2.1 and 2.2 in \cite{FT}).
\end{proof}

\section{Proof of Theorem 1.2}
We consider the general case $n\geq 2$, $N\geq 1$. Our starting point is the Euler equation from Lemma 2.1 c)
\begin{align}\label{Eeq}
\delta \intom DF_\mu(\nabdel):\nabla\varphi\dx+\intom DF(\nabdel):\nabla\varphi\dx=0,
\end{align}
where we choose $\varphi=\eta^2|\udel|^s\udel$ for some positive exponent $s$ and a function $\eta\in C^1_0(\Omega)$, $0\leq\eta\leq 1$, which is an admissible choice due to Lemma 2.1 a).  We observe
\[
\nabla \varphi=\udel\otimes\Big(2\eta |\udel|^s\nabla\eta+\eta^2\nabla\big(|\udel|^s\big)\Big)+\eta^2|\udel|^s\nabdel
\]
and therefore
\begin{align}\label{3.9}
\begin{split}
DF&(\nabdel):\nabla \varphi\\
=&2\eta |\udel|^s DF(\nabdel):(\udel\otimes\nabla\eta)+\eta^2DF(\nabdel):\Big(\udel\otimes\nabla\big(|\udel|^s\big)\Big)\\
&\hspace{5.5cm}+\eta^2|\udel|^s DF(\nabdel):\nabdel=:T_1+T_2+T_3.
\end{split}
\end{align}
Note that due to \gr{G6} we have

\[
T_3= \eta^2|\udel|^s DF(\nabdel):\nabdel\geq \nu_1\eta^2|\nabdel||\udel|^s-\nu_2\eta^2|\udel|^s.
\]
For the term $T_2$ of \gr{3.9}, we use the structure condition \gr{G9} (in case $N>1$) and get
\begin{align*}
DF(\nabdel):\Big(\eta^2\udel\otimes\nabla\big(|\udel|^s\big)\Big)=\frac{\widetilde{F}'\big(|\nabdel|\big)}{|\nabdel|}\Big(\eta^2\udel\otimes\nabla \big(|\udel|^s\big)\Big):\nabdel.
\end{align*}
From
\begin{align*}
\Big(\eta^2\udel\otimes\nabla \big(|\udel|^s\big)\Big):\nabdel&=\frac{1}{2}s|\udel|^{s-1}\eta^2\nabla|\udel|\cdot \nabla|\udel|^2\\
&=s|\udel|^{s}\eta^2\nabla|\udel|\cdot \nabla|\udel|\geq 0
\end{align*}
we then obtain the estimate
\begin{align*}
 DF(\nabdel):\nabla \varphi\geq 2\eta |\udel|^s DF(\nabdel):(\udel\otimes\nabla\eta) +\nu_1\eta^2|\nabdel||\udel|^s-\nu_2\eta^2|\udel|^s
\end{align*}
and similarly (compare the definition of $F_\mu$ and recall inequality \gr{muconst})
\begin{align*}
 DF_\mu(\nabdel):\nabla \varphi\geq 2\eta |\udel|^s DF_\mu(\nabdel):(\udel\otimes\nabla\eta) +\widetilde{\nu}_1\eta^2|\nabdel||\udel|^s-\widetilde{\nu}_2\eta^2|\udel|^s.
\end{align*}
Note that in the scalar case these inequalities are valid without condition \gr{G9}.
The Euler equation (\ref{Eeq}) then implies (using the boundedness of $DF$ and  $DF_\mu$, compare \gr{G4})
\begin{align}\label{3.14}
\begin{split}
\intom |\nabdel|&|\udel|^s\eta^2\dx\leq c \left[\intom \eta^2|\udel|^s\dx+\intom |\udel|^{s+1}\eta|\nabla\eta|\dx\right]
\end{split}
\end{align}
for some constant $c=c(\nu,\widetilde{\nu})$.
In the next step we set
\[
v:=|\udel|^{s+1}\eta^2.
\]
Then 
\[
|\nabla v|\leq (s+1)\eta^2|\udel|^s\big|\nabla\big(|\udel|\big)\big|+2|\udel|^{s+1}\eta|\nabla\eta|\leq c(n)(s+1)\eta^2|\udel|^s|\nabdel|+2|\udel|^{s+1}\eta|\nabla\eta|.
\]
 Furthermore, from the Sobolev-Poincar\'e inequality we have
\[
\intom |\nabla v|\dx\geq c(n)\left(\intom |v|^\frac{n}{n-1}\dx\right)^\frac{n-1}{n},
\]
and we can therefore estimate the left-hand side of \gr{3.14} from below by
\begin{align*}
\intom |\udel|^s|\nabdel|\eta^2\dx\geq \frac{c(n)}{s+1}\left[\left(\intom |\udel|^{(s+1)\frac{n}{n-1}}\eta^\frac{2n}{n-1}\dx\right)^\frac{n-1}{n}-2\intom |\udel|^{s+1}\eta|\nabla\eta|\dx\right].
\end{align*}
We insert this into inequality \gr{3.14} which then yields 
\begin{align}\label{3.15}
\left(\intom |\udel|^{(s+1)\frac{n}{n-1}}\eta^\frac{2n}{n-1}\dx\right)^\frac{n-1}{n}\leq c(s+1)\left[\intom |\udel|^s\eta^2\dx+\intom |\udel|^{s+1}\eta |\nabla\eta|\dx\right]
\end{align}
with a constant $c=c(n,\nu,\widetilde{\nu})$. Now we fix some open ball $B_{R_0}$ inside $\Omega$. For any $j\in \N_0$ we set
\[
R_j:=\frac{n-1}{n}R_0+\Big(\frac{n-1}{n}\Big)^j\frac{R_0}{n}
\]
and consider the sequence of concentric open Balls $B_j$ of radius $R_j$ inside $B_0=B_{R_0}$. Note that
\[
\bigcap_{j=0}^\infty B_j\supset B_{\frac{n-1}{n}R_0}=:B_\infty.
\]
We further choose smooth functions $\eta_j\in C_0^\infty(B_j)$ such that $\eta_j\equiv 1$ on $B_{j+1}$, $0\leq\eta\leq 1$ and 
\[
|\nabla\eta_j|\leq \frac{2}{R_j-R_{j+1}}=c(R_0,n)\Big(\frac{n}{n-1}\Big)^j.
\]
Then, together with the choice $s_j:=\big(\frac{n}{n-1}\big)^j-1$, the inequality \gr{3.15} implies
\begin{align}\label{3.16}
\left(\int_{B_{j+1}}|\udel|^{(\frac{n}{n-1})^{j+1}}\dx\right)^\frac{n-1}{n}\leq c\Big(\frac{n}{n-1}\Big)^{2j}\left[\int_{B_j}|\udel|^{s_j}\dx+\int_{B_j}|\udel|^{s_j+1}\dx\right],\quad\forall j\in\N,
\end{align}
with a constant $c=c(n,\nu,\widetilde{\nu},R_0)$. 

In the following, we fix the value of the parameter $\delta\in (0,1)$ and note that by Hölder's inequality we have
\begin{align}\label{3.6}
\begin{split}
&\int_{B_j}|\udel|^{s_j}\dx\leq \left(\int_{B_j}|\udel|^{s_j+1}\dx\right)^\frac{s_j}{s_j+1}\cdot \left(\int_{B_j}1\dx\right)^\frac{1}{s_j+1}\\
&\leq c(R_0,n)\left(\int_{B_j}|\udel|^{s_j+1}\dx\right)^\frac{s_j}{s_j+1}.
\end{split}
\end{align}
Next we let 
\[
 a_j:=\max\left\{1,\int_{B_j}|\udel|^{\big(\frac{n}{n-1}\big)^j}\dx\right\}
\]
and obtain from \gr{3.16}
\begin{align*}
 \big(a_{j+1}\big)^{\frac{n-1}{n}}&\leq \max\left\{1,c \Big(\frac{n}{n-1}\Big)^{2j}\bigg[\int_{B_j}|\udel|^{s_j}\dx+\int_{B_j}|\udel|^{s_j+1}\dx\bigg]\right\}\\
&\leq  c \Big(\frac{n}{n-1}\Big)^{2j} \max\left\{1,\int_{B_j}|\udel|^{s_j}\dx+\int_{B_j}|\udel|^{s_j+1}\dx\right\}.
\end{align*}
On the right-hand side we apply inequality \gr{3.6} with the result 
\[
 \big(a_{j+1}\big)^{\frac{n-1}{n}}\leq c \Big(\frac{n}{n-1}\Big)^{2j} \max\left\{1,\int_{B_j}|\udel|^{s_j+1}\dx+\bigg(\int_{B_j}|\udel|^{s_j+1}\dx\bigg)^{\frac{s_j}{s_j+1}}\right\}
\]
for a suitable positive constant $c=c(n,\nu,\widetilde{\nu},R_0)$, hence we arrive at

\begin{align}\label{3.7}
\big(a_{j+1}\big)^\frac{n-1}{n}\leq c\Big(\frac{n}{n-1}\Big)^{2j}\cdot a_j\quad\forall j\in\N.
\end{align}
Through an iteration, we  obtain from \gr{3.7}
\begin{align}\label{3.8}
\begin{split}
&\|\udel\|_{L^{s_j+1}(B_j,\R^N)}\\
&\leq \big(a_j\big)^{\big(\frac{n-1}{n}\big)^j}\leq c^{\;\sum\limits_{k=1}^{j-1}\big(\frac{n-1}{n}\big)^{k}}\Big(\frac{n}{n-1}\Big)^{\;\sum\limits_{k=1}^{j-1}2k\big(\frac{n-1}{n}\big)^{k}}\cdot \max\Big\{1,\|\udel\|_{L^{\frac{n}{n-1}}(\Omega,\R^N)}\Big\},
\end{split}
\end{align}
and since 
\[
\sum\limits_{k=1}^\infty \Big(\frac{n-1}{n}\Big)^k=n-1 \quad\text{as well as}\quad \sum\limits_{k=1}^\infty 2k\Big(\frac{n-1}{n}\Big)^k=2n(n-1), 
\]
we may pass to the limit $j\rightarrow\infty$ which yields
\begin{align*}
\sup_{x\in B_\infty}|\udel(x)|=\lim_{j\rightarrow\infty}\|\udel\|_{L^{s_j+1}(B_\infty,\R^N)}\leq c^{n-1}\Big(\frac{n}{n-1}\Big)^{2n(n-1)}\cdot \max\Big\{1,\|\udel\|_{L^{\frac{n}{n-1}}(\Omega,\R^N)}\Big\}
\end{align*}
with the right-hand side being bounded independently of the parameter $\delta$ since due to Lemma 2.1 b), the sequence $\udel$ is uniformly bounded in $W^{1,1}(\Omega,\R^N)$ and hence by Sobolev's embedding  in $L^\frac{n}{n-1}(\Omega,\R^N)$.
The conclusion is that we find $\sup_{x\in B_\infty}|\udel(x)|$ to be bounded by some constant which does not depend on the parameter $\delta$, which means that also the $L^1$-limit $\overline{u}$ of the $\udel$ is locally bounded. This finishes the proof of Theorem 1.2.\qed

\section{Proof of Theorem 1.4}
We would like to remind the reader of the fact that in the setting of Theorem 1.4 we restrict ourselves to the case $n=2$ and $N=1$. So let $\wudel$ denote the solution from Lemma 2.2 a) and assume henceforth that we are in the situation of Theorem 1.4. Let $x_0\in \O$. We choose $R_0>0$ small enough such that $B_{R_0}(x_0)\subset \O$ and 
\begin{equation}
\label{r0}
\int_{B_{R_0}(x_0)-D}\vert f\vert^2\dx<\varepsilon_0. 
\end{equation}
Here $\varepsilon_0>0$ is small and will be determined soon. Let $\eta\in C^\infty_0(\O)$ be a non-negative cut-off function with support in $B_{R_0}(x_0)$ and $s\ge 0$ a non-negative number. By Lemma 2.2 a), we can use the following function 
\[ \varphi= \vert \wudel\vert^s\wudel\eta^2\]
as a testing function to the Euler equation in Lemma 2.2 c), and we obtain that
\begin{equation}\label{start1} 
\delta\intom DF_\mu(\nabla\wudel)\cdot\nabla \varphi\dx+\intom DF(\nabla \wudel)
\cdot\nabla \varphi\dx+\lambda\intomd (\wudel-f)\varphi\dx=0.
\end{equation}
Note that 
\[ \nabla \varphi=(s+1)\vert \wudel\vert^s \eta^2\nabla \wudel +2\vert \wudel\vert^s\wudel\eta\nabla\eta.\]
Thus by (\ref{G4}), (\ref{G6}) and \gr{muconst}, we have
\begin{equation}\label{eq1}
\begin{aligned}
DF_{\delta,\mu}&(\nabla \wudel)\cdot \nabla \varphi\ge \\
& \delta (s+1)\widetilde{\nu}_1 \vert \wudel\vert^s\vert \nabla \wudel\vert\eta^2-\delta(s+1)\widetilde{\nu}_2\vert \wudel\vert^s\eta^2-
2\delta|DF_\mu|\vert \wudel\vert^{s+1}\eta\vert\nabla\eta\vert\\
&+(s+1)\nu_1 \vert \wudel\vert^s\vert \nabla \wudel\vert\eta^2-(s+1)\nu_2\vert \wudel\vert^s\eta^2-
2|DF|\vert \wudel\vert^{s+1}\eta\vert\nabla\eta\vert.
\end{aligned}
\end{equation}
We also note that 
\begin{equation}\label{eq2}
2\lambda (\wudel-f)\wudel\vert \wudel\vert^s\eta^2\ge -2\lambda f\wudel\vert \wudel\vert^s\eta^2\ge 
-2\lambda \vert f\vert \vert \wudel\vert^{s+1}\eta^2.
\end{equation}
Now it follows from (\ref{start1}), (\ref{eq1}) and (\ref{eq2}) that
\begin{equation}\label{start2}
\begin{aligned}
&(\delta+1)(s+1)\int_\O  \vert \wudel\vert^s\vert\nabla \wudel\vert^2\eta^2\dx \\
 &\le c(\delta+1)(s+1)\left[ 
 \intom \vert \wudel\vert^s\eta^2\dx+\intom \vert \wudel\vert^{s+1}\eta\vert\nabla \eta\vert\dx\right]
 +2\lambda \intomd \vert f\vert \vert \wudel\vert^{s+1}\eta^2\dx
\end{aligned}
\end{equation}
with a constant $c=c(\nu,\widetilde{\nu})$.
As in the proof of Theorem 1.2, we let 
\[ v=\vert \wudel\vert^{s+1}\eta^2.\]
Then 
\[ \vert \nabla v\vert \le (s+1)\vert \wudel\vert^s\vert \nabla \wudel\vert\eta^2+2\vert \wudel\vert^{s+1}\eta\vert\nabla\eta\vert\] 
and by the Sobolev inequality, we further have
\[ c(n)\left(\intom v^2\dx\right)^{\frac{1}{2}}\le \intom \vert\nabla v\vert\dx.\]
Thus (\ref{start2}) implies
\begin{equation}\label{start4}
\begin{aligned}
(\delta+1) \left(\intom|v|^2 \dx\right)^{\frac{1}{2}} \le c(\delta+1)(s+1)\left[ 
 \intom \vert \wudel\vert^s\eta^2\dx+\intom \vert \wudel\vert^{s+1}\eta\vert\nabla \eta\vert\dx\right]\\
 +\underset{\mbox{$=:T$}}{\underbrace{2\lambda \intomd \vert f\vert \vert \wudel\vert^{s+1}\eta^2\dx}},
 \end{aligned}
\end{equation}
with a constant $c=c(n,\nu,\widetilde{\nu})$.
We will estimate the term $T$ in the following way: by the Hölder inequality and (\ref{r0}), 
\begin{equation*}
\begin{aligned}
2\lambda \intomd \vert f\vert\vert \wudel\vert^{s+1}\eta^2\dx&\le
2\lambda \left( \int_{B_{R_0}(x_0)-D} \vert f\vert^2\dx\right)^{\frac{1}{2}}\left(\intom \vert \wudel\vert^{2(s+1)}\eta^4\dx\right)^{\frac{1}{2}}\\
&\le 2\lambda \varepsilon_0^{1/2}\left(\intom \vert \wudel\vert^{2(s+1)}\eta^4\dx\right)^{\frac{1}{2}}.
\end{aligned}
\end{equation*}
If we choose $\varepsilon_0$ small such that
\[ 2\lambda \varepsilon_0^{1/2}\leq\frac{1}{2}<\frac{\delta+1}{2},\]
then the term $T$ can be absorbed in the left-hand side of (\ref{start4}), and we deduce from this inequality
\begin{equation}\label{start5}
\begin{aligned}
\left(\intom |\wudel|^{2(s+1)}\eta^4\dx\right)^{\frac{1}{2}} \le 2c(s+1)\left[ 
 \intom \vert \wudel\vert^s\eta^2\dx+\intom \vert \wudel\vert^{s+1}\eta\vert\nabla \eta\vert\dx\right].
\end{aligned}
\end{equation}
Note that this is just inequality \gr{3.15} (with $n=2$) from the preceding section, so that from this point on we can simply repeat the arguments which were used to obtain the uniform local boundedness of $\udel$ in the proof of Theorem 1.2. This finishes our proof. \qed 

\vspace{1cm}
\noindent\begin{Large}\textbf{Appendix: discussion of the structure condition \gr{G9}}\end{Large}
\vspace{0cm}

\noindent For the interested reader we explain Remark \ref{rm1.2} in a more detailed form.
\setcounter{section}{1}
\renewcommand{\thesection}{\Alph{section}}
\setcounter{theorem}{1}
\begin{lemma}
 Consider a function $\widetilde{F}:[0,\infty)\rightarrow [0,\infty)$ of class $C^2$ satisfying (with constants $\nu_1,\nu_3,\nu_5>0$, $\nu_2,\nu_4\geq 0$)
\begin{enumerate}
 \item[(A1)]$\widetilde{F}(0)=0$,
 \item[(A2)]$\widetilde{F}'(0)=0$,
 \item[(A3)]$\nu_1t-\nu_2\leq \widetilde{F}(t)\leq \nu_3t+\nu_4$,
 \item[(A4)]$\widetilde{F}''(t)\geq 0$,
 \item[(A5)]$\displaystyle\widetilde{F}''(t)\leq \nu_5\frac{1}{1+t}$
\end{enumerate}
for any $t\geq 0$. Then we have \gr{G3}-\gr{growth} for the density $F(P):=\widetilde{F}\big(|P|\big)$, $P\in\R^{N\times n}$. If in addition for some $\nu_6>0$ and $\mu>1$
\begin{enumerate}
 \item[(A6)]$\displaystyle\min\limits_{t\geq 0}\bigg\{\frac{\widetilde{F}'(t)}{t},\widetilde{F}''(t)\bigg\}\geq \nu_6 (1+t)^{-\mu}$,
\end{enumerate}
we obtain the condition \gr{G8} of $\mu$-ellipticity for $F$.
\end{lemma}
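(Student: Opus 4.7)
The plan is to reduce every claim to pointwise scalar inequalities for $\widetilde{F}$. First I would record, for $P\neq 0$, the identities
\[
DF(P)=\widetilde{F}'(|P|)\,\frac{P}{|P|},\quad D^2F(P)(Q,Q)=\widetilde{F}''(|P|)|Q_\parallel|^2+\frac{\widetilde{F}'(|P|)}{|P|}|Q_\perp|^2,
\]
where $Q_\parallel$ denotes the orthogonal projection of $Q$ onto the line spanned by $P$ and $Q_\perp:=Q-Q_\parallel$. Using (A1), (A2), and the Taylor expansion $\widetilde{F}(t)=\tfrac12\widetilde{F}''(0)t^2+o(t^2)$ at the origin, I would verify that $F$ admits a $C^2$-extension to $P=0$ with $D^2F(0)(Q,Q)=\widetilde{F}''(0)|Q|^2$, which settles \gr{G3}. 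Condition \gr{G5} is just (A3) applied to $t=|P|$.

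Since $|Q_\parallel|^2+|Q_\perp|^2=|Q|^2$, the Hessian identity reduces \gr{growth} (and, under (A6), \gr{G8}) to scalar inequalities for the two coefficients $a(t):=\widetilde{F}''(t)$ and $b(t):=\widetilde{F}'(t)/t$: one has to show $0\leq a(t),b(t)\leq C/(1+t)$ with a constant $C=C(\nu_3,\nu_5)$ for the growth statement, and $a(t),b(t)\geq \nu_6(1+t)^{-\mu}$ for $\mu$-ellipticity. Non-negativity of $a$ is (A4); non-negativity of $b$ follows since (A2) together with (A4) forces $\widetilde{F}'\geq 0$. The upper bound on $a$ is exactly (A5), while both lower bounds required by (A6) are directly assumed.

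The main obstacle is the upper bound on $b(t)=\widetilde{F}'(t)/t$, for which naive integration of (A5) only yields $\widetilde{F}'(t)\leq\nu_5\ln(1+t)$, which is too weak. To get around this I would combine (A5) with the linear upper bound (A3) via the convexity estimate
\[
\widetilde{F}'(t)\leq \frac{\widetilde{F}(t+h)-\widetilde{F}(t)}{h}\leq \frac{\nu_3(t+h)+\nu_4}{h}\qquad\text{for every }h>0,
\]
and then let $h\to\infty$ to obtain the uniform bound $\widetilde{F}'(t)\leq \nu_3$. Separately, integrating (A5) starting from $\widetilde{F}'(0)=0$ gives $\widetilde{F}'(t)\leq \nu_5 t$. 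Combining,
\[
\frac{\widetilde{F}'(t)}{t}\leq \min\{\nu_5,\nu_3/t\}\leq \frac{2\max(\nu_3,\nu_5)}{1+t},
\]
which completes the proof of \gr{growth} (up to a new constant). The $\mu$-ellipticity assertion under (A6) then drops out immediately from the orthogonal decomposition of $D^2F$ together with the pointwise lower bound in (A6) applied to both $a$ and $b$.
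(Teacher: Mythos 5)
Your argument is correct and follows the same route as the paper: decompose $D^2F$ into its radial and tangential parts, and reduce \gr{growth} and \gr{G8} to the scalar bounds on $a(t)=\widetilde{F}''(t)$ and $b(t)=\widetilde{F}'(t)/t$. You are in fact more careful than the paper on the one nontrivial point, the upper bound for $b$: the paper's inequality (A7) as printed reads $\widetilde{F}'(t)\leq\nu_5/(1+t)$, which is a misprint (it fails for any linear-growth density, e.g. $\Phi_\mu$), while your combination of $\widetilde{F}'\leq\nu_3$ (from convexity plus (A3)) with $\widetilde{F}'(t)\leq\nu_5 t$ (from (A2) and (A5)) gives the intended estimate $b(t)\leq c/(1+t)$ cleanly.
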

\begin{remark}
 The hypotheses (A1-6) hold for the function $\widetilde{F}(t):=\Phi_\mu(t)$ defined before Lemma \ref{lem2.1}.
\end{remark}

\noindent \textit{Proof of Lemma A.1}. The validity of \gr{G3} and \gr{G5} is immediate. From (A2) and (A4) we deduce that the non-negative function $\widetilde{F}'$ is increasing with finite limit on account of (A3) (recall Remark \ref{rm1.1}), hence 
\begin{align}
 \tag{A7} 0\leq \widetilde{F}'(t)\leq \nu_5\frac{1}{1+t},\;t\geq 0,
\end{align}
provided we replace $\nu_5$ from (A5) by a larger constant if necessary. Next we observe the formula
\begin{align*}
 D^2F(P)(Q,Q)=\frac{1}{|P|}\widetilde{F}'\big(|P|\big)\left[|Q|^2-\frac{(P: Q)}{|P|^2}\right]+\widetilde{F}''\big(|P|\big)\frac{(P: Q)^2}{|P|^2},\;\;P,Q\in \R^{N\times n},
\end{align*}
implying the estimate
\begin{align}\tag{A8}
\begin{split}
 \min\Big\{\widetilde{F}''\big(|P|\big),\frac{1}{|P|}\widetilde{F}'\big(|P|\big)\Big\}|Q|^2\leq D^2F(P)(Q,Q)\\
\leq \max\Big\{\widetilde{F}''\big(|P|\big),\frac{1}{|P|}\widetilde{F}'\big(|P|\big)\Big\}|Q|^2,\;\;P,Q\in\R^{N\times n}.
\end{split}
\end{align}
In conclusion, \gr{growth} follows from (A4), (A5), (A7) and (A8). In the same manner \gr{G8} is deduced from the additional hypothesis (A6). \qed

%\bibliography{lit}

\begin{thebibliography}{10}

\bibitem{Giu}
E.~Giusti.
\newblock {\em Minimal surfaces and functions of bounded variation}, volume~80
  of {\em Monographs in Mathematics}.
\newblock Birkh{\"a}user, Basel, 1984.

\bibitem{GMS1}
M.~Giaquinta, G.~Modica, and J.~Sou\v{c}ek.
\newblock {\em Cartesian Currents in the Calculus of Variations I: Cartesian
  Currents}.
\newblock Ergebnisse der Mathematik und ihrer Grenzgebiete. 3. Folge / A Series
  of Modern Surveys in Mathematics. Springer Berlin Heidelberg, 1998.

\bibitem{GMS2}
M.~Giaquinta, G.~Modica, and J.~Sou\v{c}ek.
\newblock {\em Cartesian Currents in the Calculus of Variations II: Variational
  Integrals}.
\newblock Ergebnisse der Mathematik und ihrer Grenzgebiete. 3. Folge / A Series
  of Modern Surveys in Mathematics. Springer Berlin Heidelberg, 2013.

\bibitem{AFP}
L.~Ambrosio, N.~Fusco, and D.~Pallara.
\newblock {\em Functions of bounded variation and free discontinuity problems}.
\newblock Clarendon Press, Oxford, 2000.

\bibitem{Bi}
M.~Bildhauer.
\newblock {\em Convex Variational Problems: Linear, nearly Linear and
  Anisotropic Growth Conditions}.
\newblock Lecture Notes in Mathematics. Springer Berlin Heidelberg, 2003.

\bibitem{BGH}
G.~Buttazzo, M.~Giaquinta, and S.~Hildebrandt.
\newblock {\em One-dimensional Variational Problems: An Introduction}.
\newblock Oxford lecture series in mathematics and its applications. Clarendon
  Press, 1998.

\bibitem{FMT}
M.~Fuchs, J.~M{\"u}ller, and C.~Tietz.
\newblock Signal recovery via {TV}-type energies.
\newblock {\em Algebra i Analiz}, 29(4):159--195, 2017.

\bibitem{Ad}
R.~A. Adams.
\newblock {\em Sobolev spaces}, volume~65 of {\em Pure and Applied
  Mathematics}.
\newblock Academic Press, New-York-London, 1975.

\bibitem{Da}
B.~Dacorogna.
\newblock {\em Direct Methods in the Calculus of Variations}.
\newblock Applied Mathematical Sciences. Springer New York, 2007.

\bibitem{Bi2}
M.~Bildhauer.
\newblock Two dimensional variational problems with linear growth.
\newblock {\em Manuscripta Mathematica}, 110(3):325--342, 2003.

\bibitem{BF6}
M.~Bildhauer and M.~Fuchs.
\newblock A geometric maximum principle for variational problems in spaces of
  vector-valued functions of bounded variation.
\newblock {\em J. Math. Sciences}, 178(3):178--235, 2011.

\bibitem{ROF}
L.~I. Rudin, S.~Osher, and E.~Fatemi.
\newblock Nonlinear total variation based noise removal algorithms.
\newblock {\em Physica D}, 60:259 -- 268, 1992.

\bibitem{BFT}
M.~Bildhauer, M.~Fuchs, and C.~Tietz.
\newblock {$C^{1,\alpha}$}-interior regularity for minimizers of a class of
  variational problems with linear growth related to image inpainting.
\newblock {\em Algebra i Analiz}, 27(3):51--65, 2015.

\bibitem{Ti}
C.~Tietz.
\newblock {\em Existence and regularity theorems for variants of the {TV}-image
  inpainting method in higher dimensions and with vector-valued data}.
\newblock PhD thesis, Saarland University, 2016.

\bibitem{BF2}
M.~Bildhauer and M.~Fuchs.
\newblock On some perturbations of the total variation image inpainting method.
  {P}art {I}: regularity theory.
\newblock {\em J. Math. Sciences}, 202(2):154 -- 169, 2014.

\bibitem{BFMT}
M.~Bildhauer, M.~Fuchs, J.~M{\"u}ller, and C.~Tietz.
\newblock On the solvability in {S}obolev spaces and related regularity results
  for a variant of the {TV}-image recovery model: the vector-valued case.
\newblock {\em J. Elliptic Parabol. Equ.}, 2:341--355, 2016.

\bibitem{BF1}
M.~Bildhauer and M.~Fuchs.
\newblock A variational approach to the denoising of images based on different
  variants of the {TV}-regularization.
\newblock {\em Appl. Math. Optim.}, 66(3):331 -- 361, 2012.

\bibitem{BF3}
M.~Bildhauer and M.~Fuchs.
\newblock On some perturbations of the total variation image inpainting method.
  {P}art {II}: relaxation and dual variational formulation.
\newblock {\em J. Math. Sciences}, 205(2):121 -- 140, 2015.

\bibitem{GT}
D.~Gilbarg and N.~Trudinger.
\newblock {\em Elliptic partial differential equations of second order}.
\newblock Grundlehren der mathematischen Wissenschaften. Springer, 1998.

\bibitem{BF7}
M.~Bildhauer and M.~Fuchs.
\newblock Regularity for dual solutions and for weak cluster points of
  minimizing sequences of variational problems with linear growth.
\newblock {\em J. Math. Sciences}, 109(5):1835--1850, 2002.

\bibitem{BF8}
M.~Bildhauer and M.~Fuchs.
\newblock On a class of variational integrals with linear growth satisfying the
  condition of {$\mu$}-ellipticity.
\newblock {\em Rendiconti di Matematica e delle sue applicazioni}, 22:249--274,
  2002.

\bibitem{BF9}
M.Bildhauer and M.~Fuchs.
\newblock Relaxation of convex variational problems with linear growth defined
  on classes of vector-valued functions.
\newblock {\em St. Petersburg Mathematical Journal}, 14(1):19--33, 2003.

\bibitem{FT}
M.~Fuchs and C.~Tietz.
\newblock Existence of generalized minimizers and of dual solutions for a class
  of variational problems with linear growth related to image recovery.
\newblock {\em J. Math. Sciences}, 210(4):458 -- 475, 2015.

\end{thebibliography}
%\bibliographystyle{unsrt}

\begin{tabular}{l l}
Michael Bildhauer (bibi@math.uni-sb.de) &  Xiao Zhong (xiao.x.zhong@helsinki.fi) \\
Martin Fuchs (fuchs@math.uni-sb.de) & FI-00014 University of Helsinki \\  
Jan Müller (jmueller@math.uni-sb.de) & Department of Mathematics\\
Saarland University  & P.O. Box 68 (Gustaf Hällströmin katu 2b) \\ 
Department of Mathematics & 00100 Helsinki\\  
P.O. Box 15 11 50 & Finland\\
66041 Saarbrücken \\
Germany 

\end{tabular}

\end{document}